\newcommand{\ep}{\varepsilon}
\newcommand{\dist}{\mbox{dist}}
\newtheorem{theorem}{Theorem}
\newtheorem{remark}{Remark}
\newtheorem{lemma}{Lemma}
\title{On absolute nonshadowability of transitive maps}
\author{Sergey Tikhomirov\footnote{Max Planck Institute for Mathematics in the Science Inselstrasse 22, 04103 Leipzig, Germany.
Chebyshev Laboratory, Saint-Petersburg State University
14th lane 29B, Vasilievsky Island, St. Petersburg, 199178, Russia. email:sergey.tikhomirov@gmail.com}}
\date{}
\begin{document}

\maketitle
\begin{abstract}
We study shadowing property for random infinite pseudotrajectories of a continuous map $f$ of a compact metric space. For the cases of transitive maps and transitive attractors we prove a dichotomy: either $f$ satisfies shadowing property or random pseudotrajectory is shadowable with probability 0.
\end{abstract}

\textbf{Keywords:} shadowing, transitivity, attractor, Markov chain

\section{Introduction}

The theory of shadowing of approximate trajectories
(pseudotrajectories) of dynamical systems is now a well-developed
part of the global theory of dynamical systems (see
the monographs \cite{PilBook, PalmBook} and \cite{PilRev} for a survey of modern results). The shadowing problem is related to the following question: under which conditions, for any pseudotrajectory of $f$ does there exist a close trajectory?

It is known that a diffeomorphism $f$ has the shadowing property in a neighborhood of a hyperbolic set \cite{Ano,Bow}. Moreover if $f$ is structurally stable (see definition for example in \cite{Katok,PilSpaces}) then it has the shadowing property on the whole manifold. At the same time, it is easy to give an example of a diffeomorphism that is not structurally stable but has the shadowing property (see \cite{TikhHol}, for instance). Thus, structural stability is not equivalent to shadowing.

At the same time it was proved that in several contexts shadowing and structural stability are equivalent.
Sakai prove that the $C^1$-interior of the set of diffeomorphisms having the shadowing property coincides with the set of structurally stable diffeomorphisms \cite{Sak} (see \cite{PilRodSak} for a similar result for the orbital shadowing property). Abdenur and Diaz conjectured that a $C^1$-generic diffeomorphism with the shadowing property is structurally stable;
they have proved this conjecture for the so-called tame diffeomorphisms~\cite{AbdDiaz}. Jointly with S. Pilyugin the author proved that so-called Lipschitz shadowing is equivalents to structural stability \cite{PilTikhLipSh}. Thus, set of not structurally stable diffeomorphisms satisfying shadowing property is not very reach.

It is a natural problem to find a shadowing property which is satisfied for a broader class of diffeomorphisms. One of the possible approaches is to consider random pseudotrajectories: endow the space of pseudotrajectories with a probability measure and find sufficient conditions for probability of a pseudotrajectory to be shadowable to be close to 1 or at least positive.

Such studies were initiated in \cite{YY00}. In this work Yuan, Yorke constructed an open set of diffeomorphisms for which probability of a pseudotrajectory to be shadowable is 0. In a recent work \cite{TikhSkew} the author considered a special example of linear skew product and found probability of a finite pseudotrajectory to be shadowable.

Despite the naturalness of randomness approach currently consideration of finite pseudotrajectories is more developed \cite{Yorke1, Yorke2, TikhHol, TikhSkew}. One of the reasons is lack of positive results about shadowing of random pseudotrajectories.

In the present paper we prove that such a positive result is not possible: under transitivity assumption either all pseudotrajectories are shadowable or probability of a pseudotrajectory to be shadowable is 0. Precise statements of the results for transitive maps and transitive attractors are formulated in Theorems \ref{thmTransitive}, \ref{thmAttractor} respectively.


\section{Transitive maps}


Let $(M, \dist)$ be a compact metric space  endowed with a finite Borel measure $\mu$, such that for any open set $U$ the inequality $\mu(U) > 0$ holds. For $a> 0$, $x \in M$ denote by $B(a, x)$ the open ball of radius $a$ centered at $x$. Let $f: M \to M$ be a homeomorphism.

Let $I = [0, N]$ or $I = [0, +\infty)$. For $d>0$ we say that sequence $\{y_n\}_{n \in I}$ is a \textit{$d$-pseudotrajectory} if
\begin{equation}\label{eqpst}
\dist(y_{n+1}, f(y_n)) \leq d, \quad n, n+1 \in I.
\end{equation}
We say that a $d$-psedotrajectory $\{y_n\}_{n \in I}$ can be \textit{$\ep$-shadowed} for $\ep > 0$ if there exists $x_0 \in M$ such that
\begin{equation}\label{eqsh}
\dist(y_n, f^n(x_0)) \leq \ep, \quad n \in I.
\end{equation}
We say that $f$ has \textit{shadowing property} if for any $\varepsilon > 0$ there exists $d > 0$ such that any $d$-pseudotrajectory $\{y_n\}_{n \geq 0}$ can be $\ep$-shadowed.

\begin{remark}
In the definitions of pseudotrajectories and shadowing in equations \eqref{eqpst}, \eqref{eqsh} usually are used strict inequalities. The definition of the shadowing property with not strict inequalities is equivalent to the classical one and allows us to simplify the notation in the proofs of main results.
\end{remark}



For $y_0 \in M$, $d> 0$, $N \in \mathds{N} \cup \{+\infty\}$ denote by $\Omega_N(y_0, d)$ the set of all $d$-pseudotrajectories $\{y_n\}_{n = 0}^{+\infty}$ starting at $y_0$. Let us consider point $y_{n+1}$ being chosen at random in $B(d, f(y_n))$ uniformly with respect to the measure $\mu$: for a measurable set $A \subset M$ the equality
$$
P(y_{n+1}\in A|y_n) = \frac{\mu(A\cap B(d, f(y_n)))}{\mu(B(d, f(y_n)))}
$$
holds. Then $\Omega_N(y_0, d)$ forms a Markov chain. This naturally endows $\Omega_N(y_0, d)$ with a probability measure $P^{y_0, d}_N$. For simplicity for $N=+\infty$ we will omit it: $\Omega(y_0, d)$, $P^{y_0, d}$.

\begin{remark}
This concept was introduced in \cite{YY00} for infinite pseudotrajectories, see also \cite{TikhSkew} for a similar concept for finite pseudotrajectories.
\end{remark}

For any $y_0 \in M$, $d > 0$, $\ep > 0$, $N \in \mathds{N} \cup \{+\infty\}$ consider set $Sh_N(y_0, d, \ep) \subset \Omega_N(y_0, d)$ of pseudotrajectories $\{y_n\}$ which can be $\ep$-shadowed.
For $N = +\infty$ we denote the set as $Sh(y_0, d, \ep)$. Note that each of the sets $Sh_N(y_0, d, \ep)$ is closed for $N \in \mathds{N}$ and is measurable with respect to $P^{y_0, d}_N$. Hence $Sh(y_0, d, \ep)$ is a countable intersection of measurable events and is measurable itself.

Denote
$$
p(y_0, d, \ep) = P^{y_0, d}(Sh(y_0, d, \ep))
$$
the probability that a $d$-pseudotrajectory starting at $y_0$ can be $\ep$-shadowed.

We say that a map $f$ is \textit{absolutely not shadowable} if there exists $\varepsilon > 0$ such that for any $y_0 \in M$, $d> 0$, $p(y_0, d, \ep) = 0$. Speaking informally almost any trajectory of $f$ cannot be $\ep$-shadowed.
In \cite{YY00} Yuan and Yorke provided a class of examples of absolutely not shadowable diffeomorphism. Based on the same technique Abdenur and Diaz proved that absolute nonshadowability is $C^1$ generic among not structurally stable maps.


We say that $f$ is \textit{transitive} if there exists $r \in M$ such that
\begin{equation}\label{eqTrans}
\overline{O^+(r, f)} = M.
\end{equation}

In \cite{AbdDiaz} Abdenur and Diaz proved that $C^1$-robustly transitive and not hyperbolic diffeomorphisms  are absolutely not shadowable. Their proof is based on construction of periodic orbits with different indices.

In the present paper we remove the differentiability assumption, and what is more important, do not assume any properties of the perturbation of $f$.

\begin{theorem}\label{thmTransitive}
  If $f$ is a transitive map then one of the following holds
  \begin{itemize}
    \item[(i)] $f$ has the shadowing property;
    \item[(ii)] $f$ is absolutely not shadowable.
  \end{itemize}
\end{theorem}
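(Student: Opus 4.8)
Sketch of a proof. The plan is to establish the dichotomy by showing that if $f$ does \emph{not} have the shadowing property then $f$ is absolutely not shadowable. Assume shadowing fails: there is $\ep_0>0$ such that for every $d>0$ some $d$-pseudotrajectory cannot be $\ep_0$-shadowed. By the usual compactness argument one may take this pseudotrajectory to be \emph{finite}, $w_0,\dots,w_L$ -- if every finite $d$-pseudotrajectory were $\ep_0$-shadowable, the $\ep_0$-shadowing points of the truncations $\{y_0,\dots,y_N\}$ of an infinite $d$-pseudotrajectory would, thanks to the non-strict inequality in \eqref{eqsh}, have a limit point $\ep_0$-shadowing the whole sequence, so $f$ would have shadowing. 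Fix once and for all $\ep:=\ep_0/4$; I will prove $p(y_0,d,\ep)=0$ for every $y_0\in M$ and every $d>0$, which is exactly absolute nonshadowability.

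Fix $d>0$. Applying the above with $d/4$ in place of $d$, choose $w_0,\dots,w_L$ with $\dist(w_{i+1},f(w_i))\le d/4$ that cannot be $\ep_0$-shadowed, and put $\rho:=\ep_0/4$. The key elementary observation is: if a pseudotrajectory $\{y_n\}$ satisfies $\dist(y_{n+i},w_i)<\rho$ for some $n$ and all $i=0,\dots,L$, then $\{y_n\}$ cannot be $\ep$-shadowed, for an $\ep$-shadowing point $x_0$ would give $\dist\big(f^i(f^n(x_0)),w_i\big)\le\ep+\rho<\ep_0$ for $i=0,\dots,L$, contradicting non-$\ep_0$-shadowability of $w_0,\dots,w_L$. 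Call the occurrence of such an $n$ a \emph{bad event}; it remains to prove that a random $d$-pseudotrajectory from any $y_0$ produces a bad event almost surely.

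I would prove this in two probabilistic steps, both exploiting that $\mu$ is positive on open sets. \textbf{Local step.} Fix $\tau\in(0,\rho]$ small enough (uniform continuity of $f$ on the compact $M$) that $\dist(a,b)<\tau$ implies $\dist(f(a),f(b))<d/4$. If $\dist(y_i,w_i)<\tau$ then $B(d/2,f(w_i))\subset B(d,f(y_i))$, and since $\dist(w_{i+1},f(w_i))\le d/4<d/2$ the set $B(\tau,w_{i+1})\cap B(d/2,f(w_i))$ is open and contains $w_{i+1}$, so $P\big(\dist(y_{i+1},w_{i+1})<\tau\mid y_i\big)\ge \mu\big(B(\tau,w_{i+1})\cap B(d/2,f(w_i))\big)/\mu(M)>0$, a bound not depending on $y_i$. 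Iterating over $i=0,\dots,L-1$ yields $\theta>0$ such that every random $d$-pseudotrajectory starting in $B(\tau,w_0)$ realizes a bad event within its first $L$ steps with probability $\ge\theta$. \textbf{Global step.} Using transitivity I claim there are $\eta>0$, $T\in\mathds{N}$ with: for every $x\in M$, a random $d$-pseudotrajectory from $x$ enters $B(\tau/2,w_0)$ by time $T$ with probability $\ge\eta$. Indeed, picking $r$ with $\overline{O^+(r)}=M$, for each fixed $x$ density of $O^+(r)$ produces a finite chain with steps $\le d/4$ going from $x$ onto the orbit of $r$ and then along it into $B(\tau/2,w_0)$; as in the local step a random $d$-pseudotrajectory follows a thin tube around this chain with some positive probability $\eta_x$ within time $T_x$, and the same works on a small neighbourhood of $x$ (perturb only the initial point), so a finite subcover of $M$ gives uniform $\eta,T$. (If $M$ is finite it is a single periodic orbit and small-$d$ pseudotrajectories are genuine orbits, so alternative (i) holds; analogous degeneracies caused by isolated points are disposed of the same way first.) Applying the strong Markov property at the first entrance time into $B(\tau/2,w_0)$ and combining the two steps: from every $x\in M$, a random $d$-pseudotrajectory produces a bad event within time $T+L$ with probability at least $\kappa:=\eta\theta>0$.

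Finally I upgrade ``$\ge\kappa$'' to ``almost surely'' by blocking: conditioning at the times $kB$ with $B:=T+L$, the Markov property together with the previous paragraph gives that a bad event occurs during the $k$-th block with conditional probability $\ge\kappa$, so the probability that no bad event occurs in the first $m$ blocks is $\le(1-\kappa)^m\to0$. Hence a bad event occurs almost surely, so $Sh(y_0,d,\ep)$ has probability $0$, i.e. $p(y_0,d,\ep)=0$ for all $y_0,d$; since $\ep$ was fixed in terms of $f$ alone, $f$ is absolutely not shadowable. I expect the main obstacle to be the global step: converting transitivity of $f$ into a bound on the hitting probability of a prescribed open set that is \emph{uniform in the starting point} and over a \emph{bounded} time horizon -- this forces one to combine transitivity, compactness, uniform continuity and positivity of $\mu$, and to quarantine the point-set pathologies (finite $M$, isolated points) into case (i).
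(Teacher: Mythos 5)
Your proposal is correct and follows essentially the same route as the paper: reduce to a finite non-shadowable pseudotrajectory, show a random pseudotrajectory follows a thin tube around a prescribed finite chain with uniformly positive probability (using positivity of $\mu$ on open sets), use transitivity plus compactness to reach that chain from any starting point within a bounded time, and conclude by a Markov blocking argument that shadowing has probability zero. The only cosmetic difference is in the global step, where you obtain uniformity in the starting point via a finite subcover of neighbourhoods, while the paper covers $M$ by small balls and uses two consecutive orbit segments of the transitive point; both devices yield the same uniform hitting bound.
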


In the proof we will use the following folklore result.
\begin{lemma}\label{lemShFin}
If for any $\ep > 0$ there exists $d > 0$ such that any finite $d$-pseudotrajectory $\{y_n\}_{n = 0}^N$, $N > 0$  can be $\ep$-shadowed then $f$ satisfies the shadowing property.
\end{lemma}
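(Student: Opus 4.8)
The plan is a standard compactness argument that upgrades shadowing of all finite pseudotrajectories to shadowing of infinite ones. Fix $\ep > 0$ and let $d > 0$ be the number provided by the hypothesis for this particular $\ep$. I claim that this same $d$ witnesses the shadowing property for $\ep$.

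Let $\{y_n\}_{n \geq 0}$ be an arbitrary infinite $d$-pseudotrajectory. For every $N \in \mathds{N}$ the truncation $\{y_n\}_{n=0}^{N}$ is a finite $d$-pseudotrajectory, so by hypothesis there exists a point $x_N \in M$ with $\dist(y_n, f^n(x_N)) \leq \ep$ for all $0 \leq n \leq N$. Since $M$ is compact, the sequence $\{x_N\}_{N \in \mathds{N}}$ has a subsequence $\{x_{N_k}\}$ converging to some $x^* \in M$.

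Now fix any index $n \geq 0$. For all $k$ large enough we have $N_k \geq n$, hence $\dist(y_n, f^n(x_{N_k})) \leq \ep$. As $f$ is continuous, $f^n$ is continuous, so $f^n(x_{N_k}) \to f^n(x^*)$ as $k \to \infty$; passing to the limit in this inequality yields $\dist(y_n, f^n(x^*)) \leq \ep$. Since $n$ was arbitrary, $x^*$ $\ep$-shadows $\{y_n\}_{n \geq 0}$, and since $\{y_n\}_{n\geq0}$ was an arbitrary $d$-pseudotrajectory, $f$ has the shadowing property.

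There is no real obstacle beyond making the subsequence extraction precise; the one point worth noting is that the limit step requires the shadowing condition \eqref{eqsh} with a non-strict inequality (as flagged in the Remark), because non-strict inequalities are preserved under limits while strict ones need not be. If one prefers to work with strict inequalities throughout, the same argument applied to the hypothesis with $\ep/2$ in place of $\ep$ recovers strict $\ep$-shadowing of the infinite pseudotrajectory in the limit.
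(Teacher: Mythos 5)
Your proof is correct: the truncate--shadow--extract-a-convergent-subsequence argument is exactly the standard compactness proof of this folklore lemma, which the paper states without proof. Your observation that the non-strict inequality in \eqref{eqsh} is what makes the limit step work is precisely the point of the paper's Remark, so nothing is missing.
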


\begin{proof}[Proof of Theorem \ref{thmTransitive}]
Assume that $f$ does not satisfy shadowing property.

Lemma \ref{lemShFin} implies that there exists $\ep > 0$ such that for any $d > 0$ there exists $N$ and a $d/2$-pseudotrajectory $\{p_n\}_{n = 0}^N$ which cannot be $\ep$-shadowed.

Take this  $\ep> 0$ and fix arbitrarily $d > 0$ and corresponding $N$ and $\{p_n\}_{n = 0}^N$ a not $\ep$-shadowable $d/2$-pseudotrajectory. Below we will show that
\begin{equation}\label{eq3.1}
p(y_0, d, \ep/2) = 0.
\end{equation}
Let us note that any sequence $\{z_n\}_{n = 0}^N$ satisfying
$$
\dist(p_n, z_n) < \ep/2
$$
is not $\ep/2$-shadowable.

Since $f$ is continuous there exists $\delta > 0$ such that for any $x \in M$, $y \in B(d/2, f(x))$, $z \in B(\delta, x)$ the inclusion
\begin{equation}\label{eqAdd4.1}
B(\delta, y) \subset B(d, f(z))
\end{equation}
holds.

Denote
\begin{equation}\label{eqAdd4.2}
\eta = \frac{\inf_{x\in M}\mu(B(\delta, x))}{\sup_{x\in M} \mu(B(d, x))} \ne 0,
\end{equation}
$$
\mathcal{B} = \{\{q_n\}_{n = 0}^N: \; q_n \in B(\delta, p_n)\}.
$$
Let us show that for any $z_0 \in B(\delta, p_0)$ the inequality
\begin{equation}\label{eq4.1}
  P_N(\Omega_N(z_0, d) \cap \mathcal{B}) \geq \eta^N
\end{equation}
holds. Indeed, let $z_0, z_1, \dots, z_N$ be a random $d$-pseudotrajectory. Then for any $z_k \in B(\delta, p_k)$ the inequality
$$
P_N(z_{k+1} \in B(\delta, p_k)|z_k) \geq \frac{\mu(B(\delta, p_{k+1}))}{\mu(B(d, f(z_k)))}\geq \eta
$$
holds. Multiplying those inequalities for $k = 0, \dots, N-1$ and using Markov property we conclude that
$$
P_N(z_{k+1} \in B(\delta, p_{k+1}), \; k = 0, \dots, N-1) \geq \eta^N,
$$
which proves \eqref{eq4.1}.

Let us consider finite covering $\{U_i\}$ of $M$ by open balls of radius $\delta_1 = \delta/4$. Let $r\in M$ satisfies \eqref{eqTrans}. Trajectory of point $r$ visits each of $\{U_i\}$ infinitely many times. Let $K_1$ be such that $\{f^n(r)\}_{n = 0}^{K_1}$ visits each of $\{U_i\}$ and $K_2$ be such that $\{f^n(f^{K_1+1}(r))\}_{n = 0}^{K_2}$ visits each of $\{U_i\}$. Set $K = K_1 + K_2 +1$. For any $z_0 \in M$ there exists $0 \leq n_1 \leq n_2 \leq K$ such that
\begin{equation}\label{eq6.1}
\dist(z_0, f^{n_1}(r)) < 2 \delta_1, \quad \dist(p_0, f^{n_2}(r)) < 2 \delta_1.
\end{equation}

Consider sequence
$$
\{q_n\}_{n = 0}^{n_2 - n_1 + N} = \{f^{n_1}(r), \dots, f^{n_2 - 1}(r), p_0, p_1, \dots, p_N\}.
$$
Due to inequalities \eqref{eq6.1} sequence $\{q_n\}$ is a $d/2$-pseudotrajectory. Since it contains $\{p_n\}_{n = 0}^N$ it cannot be $\ep/2$-shadowed. Similarly to \eqref{eq4.1}
$$
P^{z_0, d}(z_n \in B(\delta, q_n), n \in [0, n_2 - n_1 + N]) \geq \eta^{n_2 - n_1 + N} \geq \eta^{K+N}.
$$
Denote $L = K+N+1$. Hence for any $z_0$
$$
P^{z_0, d}(\{z_n\}_{n = 0}^{L-1} \mbox{ is not $\ep/2$-shadowable}|z_0) \geq \eta^{L-1} \geq \eta^{L}.
$$
Similarly for any $k \geq  0$ and $y_{kL} \in M$
$$
P^{y_0, d}(\{y_n\}_{n = kL}^{(k+1)L-1} \mbox{ is not $\ep/2$-shadowable}|y_{kL}) \geq \eta^L.
$$
Combining those inequalities we conclude that for any $k \geq 0$
$$
P^{y_0, d}(\{y_n\}_{n = 0}^{(k+1)L-1} \mbox{ is not $\ep/2$-shadowable}) \geq 1- (1-\eta^L)^k.
$$
The right-hand side of the letter expression tends to 1 as $k \to \infty$ hence
$$
P^{y_0, d}(\{y_n\}  \mbox{ is not $\ep/2$-shadowable}) = 1.
$$
Theorem is proved.
\end{proof}

\section{Transitive attractors}

We say that an invariant compact set $\Lambda$ is \textit{an attractor} if there exists an open neighborhood $U$ of $\Lambda$ such that $f(U) \subset U$ and $\cap_{n \geq 0}f^n(U) = A$, see for instance \cite{KatokFirst}. See book \cite{Leonov} for systematic studies of properties of attractors.

We will use the following two properties of an attractor:
\begin{enumerate}
  \item $\dist(f^n(x), \Lambda) \to 0$ as $n \to \infty$ for any $x \in U$;
  \item for any neighborhood $V \subset U$ of $\Lambda$ there exists a neighborhood $W \subset V$ such that
      \begin{equation}\label{eqAdd6.1}
          \overline{f(W)} \subset W.
      \end{equation}
\end{enumerate}

Denote by $D(\Lambda)$ the domain of attraction of $\Lambda$:
$$
D(\Lambda) := \{x \in M: \; \dist(f^n(x), \Lambda) \to 0 \}.
$$
Note that $D(\Lambda) = \cup_{n \geq 0} f^{-n}(U)$.

We say that $f$ has the shadowing property on $\Lambda$ if for any $\ep > 0$ there exists $d > 0$ such that for any $d$-pseudotrajectory $\{y_n\}_{n \geq 0} \subset \Lambda$ there exists $x_0 \in M$ (not necessarily belonging to $\Lambda$)
such that inequalities \eqref{eqsh} hold.

We say that set $\Lambda$ is transitive if there exists $r \in \Lambda$ such that
$$
\Lambda = \overline{O^+(r, f)}.
$$

The assumption for a map to be transitive is quite restrictive. At the same time it is quite common for attractors. In some works transitivity  is included in the definition of attractor \cite{AbdNonlinearity}. In this work it was proved that transitive attractors persists under $C^1$-generic perturbations.

\begin{theorem}\label{thmAttractor}
  Let $\Lambda$ be a transitive attractor not satisfying shadowing property then there exists $\ep_0 > 0$ such that for any $y_0 \in D(\Lambda)$ there exists $d_0 > 0$ such that for any $d < d_0$ the probability of a $d$-pseudotrajectory starting at $y_0$ to be $\ep_0$-shadowed is 0:
  $$
  p(y_0, d, \ep_0) = 0.
  $$
\end{theorem}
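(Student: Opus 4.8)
The plan is to mimic the proof of Theorem \ref{thmTransitive}, adapting each step to the attractor setting where everything must stay inside (a neighborhood of) $\Lambda$. First I would use the version of Lemma \ref{lemShFin} localized to $\Lambda$ (the same folklore argument works verbatim for pseudotrajectories inside $\Lambda$): since $\Lambda$ does not satisfy the shadowing property on $\Lambda$, there exists $\ep_0 > 0$ such that for every $d > 0$ there is a finite $d/2$-pseudotrajectory $\{p_n\}_{n=0}^{N} \subset \Lambda$ that cannot be $2\ep_0$-shadowed; as in the original proof, any sequence $\{z_n\}_{n=0}^N$ with $\dist(p_n, z_n) < \ep_0$ then fails to be $\ep_0$-shadowed. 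Next I fix $y_0 \in D(\Lambda)$. Since $y_0 \in D(\Lambda) = \cup_{n\ge 0} f^{-n}(U)$, there is $m \ge 0$ with $f^m(y_0) \in U$, and by property (1) of the attractor the forward orbit of $y_0$ enters any prescribed small neighborhood $W$ of $\Lambda$ with $\overline{f(W)} \subset W$ (property (2)) after finitely many steps; this $W$ will play the role of the set that random pseudotrajectories cannot escape once they enter it.

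The core is then a two-part construction analogous to the transitive case, but with all the reference dynamics living in $\Lambda$. Choose $\delta > 0$ by uniform continuity so that for all $x \in \Lambda$, $y \in B(d/2, f(x))$, $z \in B(\delta, x)$ one has $B(\delta, y) \subset B(d, f(z))$, shrinking $\delta$ further if needed so that $B(\delta, x) \subset W$ for all $x \in \Lambda$ and so that $\delta < \ep_0$. Define $\eta$ exactly as in \eqref{eqAdd6.1}'s predecessor \eqref{eqAdd4.2}, with the balls indexed over $\Lambda$ (or over $W$), which is nonzero by compactness and the assumption $\mu(U) > 0$ for open $U$. Because $\Lambda$ is transitive, a point $r \in \Lambda$ has dense forward orbit in $\Lambda$, so a finite covering of $\Lambda$ by $\delta/4$-balls is visited by $\{f^n(r)\}$ within a uniform time $K$, and — repeating the argument — any point of $\Lambda$ can be connected by a piece of the $r$-orbit of length $\le K$ both to a neighborhood of an arbitrary given point of $\Lambda$ and to a neighborhood of $p_0$. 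The one genuinely new ingredient is a bootstrap entry estimate: for any $z_0 \in W$, a random $d$-pseudotrajectory has probability $\ge \eta^{m_0}$ (for some uniform $m_0$ depending only on $W$ and $\delta$) of reaching, in $m_0$ steps, a point within $\delta$ of some point of $\Lambda$, after which we are in the situation of Theorem \ref{thmTransitive}; here I would need to be mildly careful that the intermediate reference points $f(z_0), f^2(z_0), \dots$ stay in $W$ (which they do, since $\overline{f(W)} \subset W$ and each random step moves by at most $d$, so after further shrinking $W$ the whole excursion stays in $U$ where the analogue of \eqref{eqAdd4.1} still holds).

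Granting the entry estimate, the rest is a copy of the original block argument. Set $L = K + N + m_0 + O(1)$; I would show that for any $z_0 \in W$,
$$
P^{z_0,d}\!\left(\{z_n\}_{n=0}^{L-1} \text{ is not } \ep_0\text{-shadowable}\mid z_0\right) \ge \eta^{L},
$$
by concatenating the entry segment, a $K$-step transit along the $r$-orbit landing near $p_0$, and the bad block $\{p_n\}$, then invoking \eqref{eq4.1}-type independent-step lower bounds and the Markov property. Choosing $d_0 > 0$ small enough that the first $m$ (say) steps of any $d$-pseudotrajectory from $y_0$ enter $W$ with probability $1$ — this is where $d < d_0$ and $y_0 \in D(\Lambda)$ are used, since for small $d$ the pseudotrajectory shadows the true orbit closely enough to be dragged into $W$ — we get that from some finite time on, and then every $L$ steps thereafter, an independent event of probability $\ge \eta^{L}$ produces a non-$\ep_0$-shadowable block. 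Borel–Cantelli / the estimate $1 - (1-\eta^L)^k \to 1$ then gives $p(y_0, d, \ep_0) = 0$. The main obstacle, and the only place this differs essentially from Theorem \ref{thmTransitive}, is the forward-invariance bookkeeping: one must pick $W$ and $d_0$ so that a random $d$-pseudotrajectory, once it enters $W$, provably never leaves a fixed neighborhood of $\Lambda$ on which the continuity estimate \eqref{eqAdd4.1} and the transitivity-of-$\Lambda$ argument remain valid — I expect this to require nothing deeper than one more application of property (2) with a slightly larger nested neighborhood, but it is the step where the "attractor" hypothesis is doing real work.
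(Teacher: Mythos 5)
Your proposal is correct and follows essentially the same route as the paper: a localized version of Lemma \ref{lemShFin} producing a non-shadowable finite block $\{p_n\}\subset\Lambda$, a choice of $W$ with $\overline{f(W)}\subset W$ and of $d_0$ forcing random pseudotrajectories from $y_0$ into $W$ and trapping them there, a deterministic entry segment $z_0, f(z_0),\dots,f^{S-1}(z_0)$ reaching a $\delta$-neighborhood of $\Lambda$ (your ``entry estimate'' with $m_0$ steps is exactly the paper's $S$), a transit of length at most $K$ along the dense orbit of $r$ to reach $p_0$, and the same $\eta^L$ block/Markov argument. The forward-invariance bookkeeping you flag as the delicate point is handled in the paper precisely as you anticipate, via the condition $B(d_0,\overline{f(W)})\subset W$.
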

\begin{remark}
  Note that in Theorem \ref{thmTransitive} the choice of $d$ was uniform with respect to $y_0$ and hence Theorem \ref{thmTransitive} does not follow from Theorem \ref{thmAttractor}.
\end{remark}

\begin{proof}
Similarly to Lemma \ref{lemShFin} there exists $\ep> 0$ such that for any $d> 0$ there exists $N>0$ and a $d$-pseudotrajectory $\{p_n\}_{n = 0}^{N}$ which cannot be $\ep$-shadowed.

Fix $\ep_0 = \ep/4$, $V = B(\ep_0, \Lambda) \cap U$ and $W$, satisfying \eqref{eqAdd6.1}.

Fix $y_0 \in D(\Lambda)$. There exists $n_0$ such that $f^{n_0}(y_0) \in W$. Take $d_0 < \ep/4$ such that
\begin{enumerate}
  \item for any $\{y_n\}_{n = 0}^{n_0} \in \Omega(y_0, d_0)$ the inclusion
  \begin{equation}\label{eq.1.0.5}
    y_{n_0} \in W
  \end{equation}
  holds;
  \item $B(d_0, \overline{f(W)}) \subset W$.
\end{enumerate}
Then for any $d < d_0$ and $\{y_n\} \in \Omega(y_0, d)$ the inclusions
\begin{equation}\label{eq1.1}
  y_n \in W, \quad \mbox{for } n \geq n_0
\end{equation}
holds.

Fix arbitrarily $d < d_0$ and find $N > 0$ and a $d$-pseudotrajectory $\{p_n\}_{n = 0}^N \subset \Lambda$ which cannot be $\ep$-shadowed.

Take $\delta < d/4$ such that inclusion \eqref{eqAdd4.1} holds. Due to compactness of $M$ and hence compactness of $\overline{W}$ there exists $S > 0$ such that
$$
f^n(z) \in B(\delta/4, \Lambda), \quad n \geq S, z \in W.
$$
Arguing similarly to the proof of Theorem \ref{thmTransitive} there exists $K > 0$ such that for any $z_0 \in W$ there exists $0 < n_1 < n_2 < K$ such that
$$
\dist(f^S(z_0), f^{n_1}(r)) < \delta/2, \quad \dist(f^{n_2}(r), p_0) < \delta/2.
$$
Hence the sequence
$$
\{q_n\}_{n = 0}^{S+n_2 - n_1 + N} = \{ z_0, f(z_0), \dots, f^{S-1}(z_0), f^{n_1}(r), \dots, f^{n_2 - 1}(r), p_0, \dots, p_N \}.
$$
Define $\eta$ by \eqref{eqAdd4.2} and $L = S+K+N+1$. Arguing similarly to the proof of Theorem \ref{thmTransitive} we conclude that
$$
P^{z_0, d}(z_n \in B(\delta, q_n), n \in [0, S + n_2 - n_1 + N]) \geq \eta^{n_2 - n_1 + N} \geq \eta^{K+S+N}.
$$
Again arguing similarly to Theorem \ref{thmTransitive} and using inclusion \eqref{eq1.1} we conclude that for any $z_0 \in W$ the equality
$$
P^{z_0, d}(\{z_n\} \mbox{ is $\ep$-shadowable}) = 0
$$
holds. Combining the latter with inclusion \eqref{eq.1.0.5} we conclude that
$$
P^{y_0, d}(\{y_n\} \mbox{ is $\ep$-shadowable}) = 0,
$$
which completes the proof of Theorem \ref{thmAttractor}.
\end{proof}

\textbf{Acknowledgement.} Research of the author is supported by the Russian Science Foundation grant 14-21-00035, Saint-Petersburg State University research grant 6.38.223.2014, and RFBR 15-01-03797a.

\end{document}